\newcommand{\CM}{Cohen-Macaulay}
\newcommand{\D}{\mathbf{D}_{\bullet} }
\newcommand{\I}{\mathbf{I^\bullet} }
\newcommand{\K}{\mathbb{K}_{\bullet} }
\newcommand{\Kc}{\mathbb{K}^{\bullet} }
\newcommand{\C}{\mathbf{C} }
\newcommand{\X}{\mathbf{X} }
\newcommand{\F}{\mathbf{F}_{\bullet} }
\newcommand{\rt}{\rightarrow}
\newcommand{\bx}{\mathbf{x}}
\newcommand{\by}{\mathbf{y}}
\newcommand{\reg}{\operatorname{reg}}
\newcommand{\grade}{\operatorname{grade}}
\newcommand{\Tot}{\operatorname{Tot}}
\newcommand{\mindeg}{\operatorname{mindeg}}
\newcommand{\codim}{\operatorname{codim}}
\newcommand{\projdim}{\operatorname{projdim}}
\newcommand{\Hom}{\operatorname{Hom}}
\newcommand{\Ext}{\operatorname{Ext}}
\newcommand{\Tor}{\operatorname{Tor}}
\theoremstyle{plain}
\newtheorem{theorem}{Theorem}[section]
\newtheorem{corollary}[theorem]{Corollary}
\newtheorem{lemma}[theorem]{Lemma}
\newtheorem{proposition}[theorem]{Proposition}
\theoremstyle{definition}
\newtheorem{remark}[theorem]{Remark}
\theoremstyle{remark}
\begin{document}
\title{Regularity of Koszul modules}
 \author{Tony~J.~Puthenpurakal }
\address{Department of Mathematics, Indian Institute of Technology Bombay, Powai, Mumbai 400 076, India}

\email{tputhen@math.iitb.ac.in}
\subjclass{Primary 13D02, 13D45  ; Secondary 13C40 }
\keywords{ regularity, Koszul homology modules, spectral sequences}

\begin{abstract}
Let $K$ be a field and let $S = K[X_1, \ldots, X_n]$. Let $I$ be a graded ideal in $S$ and let $M$ be a finitely generated graded $S$-module.
We give  upper bounds on the regularity of Koszul homology modules $H_i(I, M)$ for several classes of $I$ and $M$.
\end{abstract}
\maketitle
\section{Introduction}
The Koszul complex is a fundamental construction in commutative algebra.
W. Vasconcelos writes in his book ``Integral closures'' \cite[page 280]{VasBook-05}:
\begin{quote}
"While the vanishing of the homology of a Koszul complex $K(\bx, M)$ is easy to track, the module theoretic properties
of its homology, with the exception of the ends, is difficult to fathom. For instance, just trying to see
whether a prime is associated to some  $H_i(\bx, M)$  can be very hard."
\end{quote}
The purpose of this paper is to enhance our knowledge about Koszul homology modules by establishing the following  results:

\s \label{setup} \emph{Setup:} Let $K$ be a field and let $S = K[X_1, \ldots, X_n]$. We give $S$ the standard grading. Let $I$ be a homogeneous ideal in $S$.
Let $\bx = x_1,\ldots,x_l$   a  system of homogeneous, not-necessarily minimal, generators of $I$ and let $\K(\bx)$ be the Koszul complex with respect  to $\bx$.  Assume
$|x_1| \geq |x_2| \geq \ldots \geq |x_l|$. Here $|x|$ denotes degree of $x$.  Let $g = \grade I$.  If $N$ is a finitely generated graded $S$-module then $\reg N$ denotes the regularity of $N$. Furthermore set
$t_i(N) = \reg \Tor^S_i(N,K)$ for $i \geq 0$.

We first prove
\begin{theorem}\label{m1}Let  $M$ be a  perfect $S$-module of grade $n-g$. Also assume that $\ell(M/IM)$ is finite. Then
$$\reg H_k(\bx, M) \leq \sum_{i=1}^{g+k}|x_i|   + t_{n-g}(M) - n.$$
\end{theorem}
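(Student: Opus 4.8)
The plan is to move the computation from $M$ to its top local cohomology module and then estimate crudely. Write $g=\grade I$; throughout I use that a perfect module of grade $n-g$ is the same thing as a \CM{} $S$-module of dimension $g$.

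First I would record the two roles played by the hypotheses. Because $M$ is \CM{} of dimension $g$, we have $H^i_\m(M)=0$ for $i\neq g$, hence $R\Gamma_\m(M)\simeq H^g_\m(M)[-g]$ in the derived category of graded $S$-modules. Because $\ell(M/IM)<\infty$, every $H_k(\bx,M)$ is annihilated by $I+\ann M$, which is $\m$-primary; being finitely generated it therefore has finite length, so $\reg H_k(\bx,M)$ equals the largest degree in which $H_k(\bx,M)$ does not vanish.

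The main step is the isomorphism of graded $S$-modules
$$H_k(\bx,M)\;\cong\;H_{k+g}\bigl(\bx,\,H^g_\m(M)\bigr).$$
Since $\K(\bx)$ is a bounded complex of finitely generated free modules, $R\Gamma_\m\bigl(\K(\bx)\otimes_S M\bigr)\simeq \K(\bx)\otimes_S R\Gamma_\m(M)\simeq \K(\bx)\otimes_S H^g_\m(M)[-g]$; on the other hand $\K(\bx)\otimes_S M$ has finite-length homology, so the canonical map $R\Gamma_\m\bigl(\K(\bx)\otimes_S M\bigr)\to \K(\bx)\otimes_S M$ is a quasi-isomorphism (its cone is supported on the punctured spectrum, where the complex is already acyclic). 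Passing to homology in degree $k$ and absorbing the shift yields the display. This same identification can be extracted from the two spectral sequences of the double complex $\K(\bx)\otimes_S\F$ after applying $H^n_\m(-)$; I expect the delicate point of the proof is the bookkeeping here — tracking that the internal grading is not twisted while the homological degree moves by $g$ — so I would either be careful with that double complex or stay with the $R\Gamma_\m$ formulation.

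To conclude, I would observe that $H_{k+g}(\bx,H^g_\m(M))$ is a subquotient of the term of $\K(\bx)\otimes_S H^g_\m(M)$ in homological degree $k+g$, which is $\bigoplus_{\#T=k+g}H^g_\m(M)\bigl(-\sum_{i\in T}|x_i|\bigr)$ with $T$ ranging over the $(k+g)$-subsets of $\{1,\dots,l\}$. This module is bounded above in degree, with top degree $a_g(M)+\sum_{i=1}^{k+g}|x_i|$, where $a_g(M)$ is the top degree of $H^g_\m(M)$ and the assumption $|x_1|\geq\cdots\geq|x_l|$ makes the largest twist occur at $T=\{1,\dots,k+g\}$. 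Hence, by the finite-length remark and the main isomorphism,
$$\reg H_k(\bx,M)\;\leq\;a_g(M)+\sum_{i=1}^{k+g}|x_i|.$$
Finally $a_g(M)=t_{n-g}(M)-n$: by graded local duality $H^g_\m(M)^{\vee}\cong\Ext^{n-g}_S(M,S(-n))$, and since the minimal free resolution of $M$ has length $n-g$, the module $\Ext^{n-g}_S(M,S)$ has a minimal generator in degree $-t_{n-g}(M)$ and no element of smaller degree, so $a_g(M)=-\mindeg\Ext^{n-g}_S(M,S(-n))=t_{n-g}(M)-n$. Substituting gives the claimed bound. (When $k>l-g$ the statement is vacuous, since then $H_k(\bx,M)=0$.)
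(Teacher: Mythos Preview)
Your proof is correct and takes a genuinely different route from the paper. The paper dualises first: it forms $\D=\Tot(\K(\bx)\otimes\F)$, shows via two spectral sequences that $\Ext^n_S(H_k(\bx,M),S)\cong H^{n+k}(\D^*)$ with $\D^*=\Tot(\Kc\otimes\F^*)$, and then invokes the Eisenbud--Huneke--Ulrich lemma on cycles in a double complex (using that the vertical cohomology of $\Kc\otimes\F^*$ vanishes for $q<n-g$ and the horizontal cohomology vanishes for $q>n-g$) to bound $\mindeg H^{n+k}(\D^*)$ from below; local duality then converts this into the regularity bound. You instead apply $R\Gamma_\m$ at the outset and use the Cohen--Macaulayness of $M$ to collapse it to a single shift, obtaining the clean identification $H_k(\bx,M)\cong H_{k+g}(\bx,H^g_\m(M))$, after which the degree bound is a one-line subquotient estimate and local duality enters only to compute $a_g(M)=t_{n-g}(M)-n$. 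Your argument is shorter and avoids the EHU cycle lemma entirely; the paper's machinery, on the other hand, is set up so that the same spectral-sequence template handles Theorem~\ref{m2} as well (where $M$ is not perfect and your single-shift trick for $R\Gamma_\m(M)$ is unavailable), so its payoff is uniformity across both results.
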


Recall an ideal $I$ is said to be strongly Cohen-Macaulay if $H_i(I)$ is \CM \ for all $i$. For instance if $I$ is licci then it is strongly \CM, see \cite[1.14]{H}. As an easy application of Theorem \ref{m1} we get,
\begin{corollary}\label{scm}
If $I$ is strongly \CM \ ideal of grade $g$ then
$$ \reg H_k(\bx) \leq \sum_{i=1}^{g+k}|x_i|   - g. $$
\end{corollary}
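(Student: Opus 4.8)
The plan is to cut the module $S$ down by a generic complete intersection of linear forms so that Theorem~\ref{m1} becomes applicable, and then to show that this operation changes neither side of the asserted inequality. Since $\reg$, the numbers $t_i(-)$, and the formation of Koszul homology all commute with a flat base change $S\to S\otimes_K K'$, I would first reduce to the case where $K$ is infinite. The one external input I would invoke is the standard structural fact about strongly \CM \ ideals: because $H_0(I)=S/I$ is \CM, the ring $S/I$ is unmixed of dimension $n-g$, and every nonzero Koszul homology module $H_i(I)$ is \CM \ of dimension exactly $n-g$ (see e.g.\ \cite{H}). Since $H_k(\bx)$ is a finite direct sum of degree twists of the $H_i(I)$ (repeatedly split off a redundant generator via the K\"unneth isomorphism $H_k(\K(\bx)) \cong H_k(\K(\bx\setminus x_j)) \oplus H_{k-1}(\K(\bx\setminus x_j))(-|x_j|)$), the same holds for each nonzero $H_k(\bx)$ even when $\bx$ is not a minimal generating set.

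Next I would fix generic linear forms $\ell_1,\dots,\ell_{n-g}$, set $\mathfrak b=(\ell_1,\dots,\ell_{n-g})$ and $M=S/\mathfrak b$, and verify the hypotheses of Theorem~\ref{m1} for $M$: it is a complete intersection, hence perfect, of grade $n-g$; its minimal free resolution is the Koszul complex on the $\ell_j$, so $\Tor^S_{n-g}(M,K)\cong K(-(n-g))$ and thus $t_{n-g}(M)=n-g$; and, as $\dim S/I=n-g$, a generic codimension-$(n-g)$ linear section meets $V(I)$ in dimension $0$, so $\ell(M/IM)=\ell\big(S/(I+\mathfrak b)\big)$ is finite. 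Theorem~\ref{m1} then yields
$$\reg H_k(\bx,M)\ \le\ \sum_{i=1}^{g+k}|x_i|+t_{n-g}(M)-n\ =\ \sum_{i=1}^{g+k}|x_i|-g.$$
(When $g=n$ this is already the corollary, with $M=S$.)

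It then remains to identify $\reg H_k(\bx)$ with $\reg H_k(\bx,M)$. For the first half I would note that a generic choice of the $\ell_j$ is an $S$-regular sequence that is simultaneously a nonzerodivisor on each of the finitely many modules $H_i(\bx,S)$ — they are \CM \ of dimension $n-g$, hence have only finitely many associated primes, none equal to $\mathfrak m$ — and then feed the short exact sequences $0\to\K(\bx)(-1)\xrightarrow{\ell_j}\K(\bx)\to\K(\bx)/\ell_j\K(\bx)\to0$ into their homology long exact sequences; these split because the connecting maps are multiplication by $\ell_j$ on the relevant $H_i$, so inducting on $n-g$ gives $H_k(\bx,M)=H_k(\bx)/\mathfrak b\,H_k(\bx)$. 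For the second half I would use the elementary fact that modding a \CM \ module out by a regular sequence of linear forms leaves the regularity unchanged: if $N$ is \CM \ of dimension $d\ge1$ and $\ell$ is a linear nonzerodivisor, then $0\to N(-1)\xrightarrow{\ell}N\to N/\ell N\to0$ together with $H^{d-1}_{\mathfrak m}(N)=0$ gives $H^{d-1}_{\mathfrak m}(N/\ell N)=\ker\big(\ell\colon H^d_{\mathfrak m}(N)(-1)\to H^d_{\mathfrak m}(N)\big)$, whose largest nonvanishing degree exceeds that of $H^d_{\mathfrak m}(N)$ by exactly $1$; since $N/\ell N$ is \CM \ of dimension $d-1$ this yields $\reg(N/\ell N)=\reg N$. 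Iterating (each intermediate quotient is again \CM) gives $\reg H_k(\bx)=\reg\big(H_k(\bx)/\mathfrak bH_k(\bx)\big)=\reg H_k(\bx,M)$, which combined with the display proves the corollary.

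The step I expect to be the real obstacle is the structural input at the outset — that every nonzero $H_i(I)$ of a strongly \CM \ ideal has dimension precisely $n-g$, rather than merely being \CM \ of some smaller dimension. This is exactly what guarantees that $n-g$ generic linear forms form a regular sequence on every $H_k(\bx)$, hence that replacing $S$ by $M=S/\mathfrak b$ is a regularity-faithful reduction; by contrast the computation of $t_{n-g}(M)$, the finiteness of $\ell(M/IM)$, and the regularity-preservation lemma are all routine local-cohomology bookkeeping.
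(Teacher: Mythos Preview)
Your proof is correct and follows essentially the same route as the paper: reduce to an infinite field, use the fact that all nonzero $H_i(\bx)$ are \CM\ of dimension $n-g$ to choose $n-g$ linear forms that are simultaneously regular on every $H_i(\bx)$, set $M=S/(\ell_1,\dots,\ell_{n-g})$, apply Theorem~\ref{m1} with $t_{n-g}(M)=n-g$, and conclude via Proposition~\ref{mod-reg}. Your write-up simply supplies more detail (the identification $H_k(\bx,M)\cong H_k(\bx)/\mathfrak b\,H_k(\bx)$, the finiteness of $\ell(M/IM)$, and the K\"unneth reduction for non-minimal $\bx$) than the paper's terse version, which defers the dimension statement to \cite[4.2.2]{Vas} and the regularity preservation to Proposition~\ref{mod-reg}.
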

We note that if $I$ is zero-dimensional then it is trivially strongly \CM.

Another application of Theorem \ref{m1} is
\begin{corollary}\label{cm}
If $I$ is a \CM \ ideal of grade $g$ then
$$ \reg S/I \leq \sum_{i=1}^{g}|x_i|   - g. $$
\end{corollary}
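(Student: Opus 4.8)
The plan is to deduce this from Theorem \ref{m1} by passing to a general Artinian reduction. Since regularity, the Cohen--Macaulay property and the degrees $|x_i|$ are all unchanged under a flat extension of the base field, I would first reduce to the case where $K$ is infinite. Because $S$ is Cohen--Macaulay we have $\dim S/I = n-g$, so a general sequence of linear forms $\by = y_1,\dots,y_{n-g}\in S_1$ is a homogeneous system of parameters for $S/I$, and hence, $S/I$ being Cohen--Macaulay, an $S/I$-regular sequence. The module I would feed into Theorem \ref{m1} is $M = S/(\by)$.

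Next I would verify that $M$ meets the hypotheses of Theorem \ref{m1}. After a linear change of coordinates $\by$ becomes a subset of the variables, so the Koszul complex on $\by$ is the minimal graded free resolution of $M$; thus $M$ is perfect of grade $\projdim M = n-g$, and $\Tor^S_{n-g}(M,K)\cong K(-(n-g))$, which gives $t_{n-g}(M) = n-g$. Moreover $M/IM = S/(I+(\by))$ has finite length since $\by$ is a system of parameters for $S/I$. Applying Theorem \ref{m1} with $k=0$ would then yield
$$\reg H_0(\bx,M) \;\le\; \sum_{i=1}^{g}|x_i| + t_{n-g}(M) - n \;=\; \sum_{i=1}^{g}|x_i| - g .$$

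Finally I would identify both ends with invariants of $S/I$. Since $\bx$ generates $I$, we have $H_0(\bx,M) = M/(\bx)M = S/(I+(\by))$. And since $\by$ is an $S/I$-regular sequence of linear forms, iterating the elementary fact that $\reg(N/zN)=\reg N$ for a linear nonzerodivisor $z$ on a finitely generated graded module $N$ (read off from $0\to N(-1)\xrightarrow{z} N\to N/zN\to 0$ together with the subadditivity of regularity in short exact sequences) gives $\reg\bigl(S/(I+(\by))\bigr) = \reg(S/I)$. Combining this with the displayed inequality produces the asserted bound.

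The content here is modest once Theorem \ref{m1} is in hand; the one point that needs care is the reduction to the Artinian case, namely that over an infinite field a general choice of $n-g$ linear forms is simultaneously (i) a system of parameters for $S/I$, so that $M/IM$ has finite length, and (ii) an $S/I$-regular sequence, so that regularity is preserved, together with the harmless reduction to an infinite base field. Choosing \emph{linear} forms (rather than an arbitrary homogeneous system of parameters) is exactly what makes the resulting estimate coincide with the bound in the statement.
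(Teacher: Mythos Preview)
Your argument is correct and follows essentially the same route as the paper: reduce to an infinite base field, choose linear forms $\by=y_1,\dots,y_{n-g}$ that form an $S/I$-regular sequence, set $M=S/(\by)$, and apply Theorem~\ref{m1} with $k=0$ together with Proposition~\ref{mod-reg}. The paper additionally insists that $\by$ be a system of parameters for the higher $H_i(\bx)$, but this is superfluous here (it is carried over from the strongly Cohen--Macaulay corollary); your observation that $\ell(M/IM)<\infty$ already suffices for the hypothesis of Theorem~\ref{m1}.
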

Next we show:
\begin{theorem}\label{m2}
Let $I$ be a zero-dimensional graded ideal in $S$ and let $M$ be a finitely generated graded $S$-module. Then for $k \leq \codim M$,
$$\reg H_k(\bx, M) \leq \sum_{i=1}^{n}|x_i| + t_k(M)  - n    $$
\end{theorem}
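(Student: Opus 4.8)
\emph{Proof proposal.} The plan is to compare the Koszul complex $\K(\bx)$ against the minimal graded free resolution $\F$ of $M$ via the double complex $\K(\bx)\otimes_S\F$, and to control the resulting spectral sequence using Corollary \ref{scm} together with a strict monotonicity property of the invariants $t_i(M)$.

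I would begin with two reductions that are available because $I$ is zero-dimensional, hence $\m$-primary. First, $I$ annihilates each $H_k(\bx,M)$, so these modules — and all the auxiliary modules built from them below — have finite length, and $\reg$ is just the top nonvanishing degree; in particular extensions, submodules and quotients cannot raise regularity. Second, a zero-dimensional ideal is strongly Cohen-Macaulay, so Corollary \ref{scm} (with $g=n$) gives $\reg H_p(\bx,S)\le\sum_{i=1}^{n+p}|x_i|-n$ for every $p$, the statement being vacuous once $n+p>l$ since then $H_p(\bx,S)=0$.

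Next I would run the spectral sequence of $\K(\bx)\otimes_S\F$. Computing homology first along $\F$ shows the total complex has homology $H_\bullet(\bx,M)$; computing homology first along $\K(\bx)$, and using that each $\K_p(\bx)$ is free, gives
$$E^2_{p,q}=\Tor^S_q\big(H_p(\bx,S),\,M\big)\ \Longrightarrow\ H_{p+q}(\bx,M),$$
equivalently the iterated mapping cone obtained from the brutal truncations of $\F$. Resolving $M$ exhibits $E^2_{p,q}$ as a subquotient of $H_p(\bx,S)\otimes_S F_q$, whence
$$\reg E^2_{p,q}\ \le\ \reg H_p(\bx,S)+t_q(M)\ \le\ \sum_{i=1}^{n+p}|x_i|-n+t_q(M).$$
For the edge term $p=0$ this already reads $\reg E^2_{0,k}\le\sum_{i=1}^{n}|x_i|-n+t_k(M)$, exactly the asserted bound, so that piece of the filtration of $H_k(\bx,M)$ is fine.

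The substance is the off-diagonal part $p\ge 1$, and this is where the hypothesis $k\le\codim M$ enters. Since $\codim M=\grade M$ and $S$ is regular, $\Ext^j_S(M,S)=0$ for $j<\codim M$; because $k\ge 1$ forces $\codim M\ge 1$, the module $M$ is torsion and $\Hom_S(M,S)=0$ as well. Dualizing $\F$ therefore yields a minimal complex that is exact in cohomological degrees $<\codim M$, and tracking the top-degree generators through its (necessarily $\m$-valued) differentials forces $t_0(M)<t_1(M)<\cdots<t_{\codim M}(M)$. The hard part will be the last step: for $p\ge 1$ the naive estimate above overshoots the target by $\sum_{i=n+1}^{n+p}|x_i|-\big(t_k(M)-t_{k-p}(M)\big)$, which monotonicity alone does not force to be $\le 0$, so one must show that precisely this excess top-degree part of $E^r_{p,q}$ is what the incoming differentials $d^r\colon E^r_{p-r+1,\,q+r}\to E^r_{p,q}$ hit, and hence dies in $E^\infty$. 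On the mapping-cone side this amounts to proving that the relevant connecting homomorphisms are surjective onto the highest graded strata, for which one again uses the finite length of everything in sight and the strict growth of the $t_i(M)$. Once that surjectivity is in hand, every surviving subquotient $E^\infty_{p,q}$ with $p+q=k$ satisfies $\reg E^\infty_{p,q}\le\sum_{i=1}^{n}|x_i|-n+t_k(M)$, and since $H_k(\bx,M)$ is filtered by these subquotients the theorem follows.
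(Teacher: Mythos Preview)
Your approach diverges from the paper's, and the step you yourself flag as ``the hard part'' is a genuine gap that the tools you name do not close. In your spectral sequence $E^2_{p,q}=\Tor^S_q(H_p(\bx,S),M)\Rightarrow H_{p+q}(\bx,M)$, the off-diagonal terms with $p\ge 1$ carry the naive bound $\sum_{i=1}^{n+p}|x_i|-n+t_{k-p}(M)$, and you need the excess $\sum_{i=n+1}^{n+p}|x_i|-\big(t_k(M)-t_{k-p}(M)\big)$ to disappear at $E^\infty$. Strict monotonicity only gives $t_k(M)-t_{k-p}(M)\ge p$, while the $|x_i|$ may be arbitrarily large; and the mechanism you propose (incoming differentials killing the top strata) is neither proved nor, in simple examples, what actually occurs. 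For instance with $n=1$, $\bx=(X^3,X^2)$, $M=S/(X)$ and $k=1$, the position $(p,q)=(1,0)$ receives \emph{no} nonzero differential at any page, so nothing is killed; the bound still holds, but only because your subquotient estimate $\reg(H_p(\bx,S)\otimes F_q)$ happens to overshoot the actual $\reg E^2_{1,0}$. You have not provided an argument that controls $\reg E^\infty_{p,q}$ in general, and ``finite length plus strict growth of the $t_i$'' does not supply one.

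The paper avoids this difficulty by dualizing first. Since $H_k(\bx,M)$ has finite length, local duality reduces the problem to bounding $\mindeg H^{n+k}(\D^*)$ from below, where $\D^*=\Tot(\K^*\otimes\F^*)$. In that double complex one has clean acyclicity on both sides of the single position $(n,k)$: the vertical homology at $(p,q)$ is $K_p^*\otimes\Ext^q_S(M,S)=0$ for $q<\codim M$ (this is exactly where $k\le\codim M$ enters), and the horizontal homology is $H^p(\K^*)\otimes F_q^*=0$ for $p<n$ by self-duality of the Koszul complex and $\grade I=n$. A direct degree count shows the $(n,k)$-component of any cycle of degree below $-\sum_{i=1}^n|x_i|-t_k(M)$ vanishes, and then the element-wise vanishing Lemma \ref{ehu} kills the class outright. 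The key point is that dualization converts your uncontrolled off-diagonal $\Tor$'s into $\Ext^q_S(M,S)$'s that literally vanish for $q<k$, so no delicate differential-chasing is needed.
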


The main technique to prove our results is an analysis of a spectral sequence, discussed in section 3. Here is an overview of the contents of this paper. In section two we discuss a few preliminaries that we need. In section three we discuss the spectral sequence we need. In section four we give a proof of Theorem \ref{m1} and discuss its corollaries. In section five we prove Theorem \ref{m2}.

\section{Preliminaries}
In this section we discuss a few preliminary results that we need. In this paper all rings considered are Noetherian and all modules considered (unless otherwise stated) is finitely generated.

\s\label{reg-def} Let $K$ be a field and let $S = K[X_1,\ldots, X_n]$. We give the standard grading on $S$. Let $S_+$ be the irrelevant maximal ideal of $S$. Let $M$ be a graded $S$-module. Let $H^i_{S_+}(M)$ be the $i^{th}$-local cohomology of $M$ with respect to $S_+$. Then the regularity of $M$ is defined as
\[
\reg M =  \max\{ i + j \mid H^i_{S_+}(M)_j \neq 0 \}.
\]
We let $t_i(M) = \reg \Tor^S_i(M, K)$.
Let $\F$ be a graded minimal free resolution of $M$. Let ${\F}_i = \bigoplus_j S(-j)^{b_{ij}}$. Then
\[
\reg M = \max \{ j - i \mid b_{ij} \neq 0 \}.
\]

The following result is well-known and easy to prove.
\begin{proposition}\label{mod-reg}
(with hypotheses as in \ref{reg-def}) Let $u_1, \ldots u_r$ be linear forms in $S$ which is an $M$-regular sequence.
Then $\reg M/(u_1, \ldots, u_r)M = \reg M$.
\end{proposition}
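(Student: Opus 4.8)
\emph{The plan.} I would argue by induction on $r$, reducing everything to the case $r=1$. Granting the case $r=1$, apply it to the $M$-regular linear form $u_1$ to get $\reg M/u_1M=\reg M$; since $u_2,\dots,u_r$ is an $M/u_1M$-regular sequence of linear forms, the inductive hypothesis then gives $\reg M/(u_1,\dots,u_r)M=\reg M/u_1M=\reg M$. So it suffices to treat a single linear form $u=u_1$ which is a nonzerodivisor on $M$ (and we may assume $M\neq 0$, whence $M/uM\neq 0$ by Nakayama). Multiplication by $u$ produces the short exact sequence of graded $S$-modules
\[
0 \longrightarrow M(-1) \xrightarrow{\ u\ } M \longrightarrow M/uM \longrightarrow 0 ,
\]
the shift $M(-1)$ recording that $\deg u = 1$ and the injectivity on the left being exactly the $M$-regularity of $u$.

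\emph{The computation.} Apply $H^\bullet_{S_+}(-)$ and use the resulting long exact sequence in each internal degree $j$. From the definition $\reg N=\max\{i+j\mid H^i_{S_+}(N)_j\neq 0\}$ one extracts the standard estimates for any short exact sequence $0\to A\to B\to C\to 0$: if $H^i_{S_+}(C)_j\neq 0$, exactness of $H^i_{S_+}(B)_j\to H^i_{S_+}(C)_j\to H^{i+1}_{S_+}(A)_j$ forces $H^i_{S_+}(B)_j\neq 0$ or $H^{i+1}_{S_+}(A)_j\neq 0$, giving $\reg C\le\max\{\reg B,\reg A-1\}$; symmetrically $\reg A\le\max\{\reg B,\reg C+1\}$. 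Now take $A=M(-1)$, $B=M$, $C=M/uM$, and note $\reg M(-1)=\reg M+1$. The first estimate gives $\reg M/uM\le\max\{\reg M,(\reg M+1)-1\}=\reg M$. The second gives $\reg M+1\le\max\{\reg M,\reg M/uM+1\}$; since $\reg M+1>\reg M$, the maximum must equal $\reg M/uM+1$, so $\reg M\le\reg M/uM$. Combining, $\reg M/uM=\reg M$, which completes the $r=1$ case and hence the proposition.

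\emph{Where the work is.} There is no real obstacle: the whole argument is bookkeeping with the long exact sequence, and the single essential use of linearity is that a degree-$1$ form shifts degrees by exactly $1$, which is precisely what collapses the two one-sided estimates into an equality (a form of degree $d\ge 2$ would only yield $\reg M\le\reg M/uM\le\reg M+d-1$). An equally routine alternative: since $u$ is a nonzerodivisor on both $S$ and $M$, tensoring a graded minimal free resolution of $M$ with $S/uS$ gives a graded minimal free resolution of $M/uM$ over $S/uS$ with the same graded Betti numbers, and regularity is independent of the standard graded polynomial ring in which a module is presented, so $\reg_S M/uM=\reg_{S/uS}M/uM=\reg_S M$. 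I would present the local-cohomology version, as it avoids invoking the change-of-rings statement.
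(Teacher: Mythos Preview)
Your argument is correct: the induction reduces cleanly to $r=1$, and the two one-sided regularity estimates from the long exact sequence in local cohomology, applied to $0\to M(-1)\to M\to M/uM\to 0$, combine to give equality precisely because the shift is by $1$. The alternative via graded Betti numbers is also sound.

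There is nothing to compare against: the paper does not supply a proof, only the remark that the result is ``well-known and easy to prove.'' Your local-cohomology argument is exactly the standard justification one has in mind for such a statement, so this is fine as a self-contained proof.
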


\s We will need the vanishing of an element in the total complex of a bigraded complex.

\begin{lemma} \label{ehu}(\cite[2.6]{EHU}) Let $\X$ be any bounded (cohomological) double complex.
Suppose that $p, q$ are chosen so that the vertical homology of $\X$ is zero at
$X^{p',q'}$,
when $p' + q' = p + q$ and $q' < q$, and the horizontal homology of $\X$ is zero at
$X^{p',q'}$,
when $p' + q' = p + q$  and $q' > q$. If $\zeta \in  H^{p+q}(\Tot(\X))$ is represented by a cycle
$$z = (z^{p', q'}) \in \bigoplus_{p' + q' = p + q}X^{p', q'}   $$
satisfies $z^{p,q} = 0$, then $\zeta = 0$.
\end{lemma}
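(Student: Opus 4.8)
The plan is to prove the lemma directly by a \emph{staircase} diagram chase inside the double complex, using boundedness of $\X$ only to guarantee that the chase terminates. Write $d'$ for the horizontal and $d''$ for the vertical differential of $\X$, and $D = d' \pm d''$ for the total differential on $\Tot(\X)$ (the signs play no role in what follows). Put $n = p+q$. The only structural fact needed is that a total cocycle $z=(z^{p',q'})$ of degree $n$ satisfies, for each $(a,b)$ with $a+b = n+1$, the ``transport'' identity $d' z^{a-1,b} = \pm\, d'' z^{a,b-1}$.

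First I would kill the components of $z$ below the antidiagonal entry $(p,q)$, i.e.\ those $z^{p',q'}$ with $q'<q$, proceeding outward from $(p+1,q-1)$. Assume inductively that $z^{p+j,q-j}=0$ for $0\le j<s$; this holds for $s=1$ by hypothesis. The transport identity at $(a,b)=(p+s,q-s+1)$ reads $d' z^{p+s-1,q-s+1} = \pm\, d'' z^{p+s,q-s}$, and its left-hand side vanishes by the inductive hypothesis, so $z^{p+s,q-s}$ is a vertical cycle at $X^{p+s,q-s}$. Since $(p+s)+(q-s)=n$ and $q-s<q$, the hypothesis on vertical homology makes it a vertical boundary, $z^{p+s,q-s}=d''w$ with $w\in X^{p+s,q-s-1}$. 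Subtracting $D$ of the element of $\Tot^{n-1}(\X)$ equal to $\pm w$ in spot $(p+s,q-s-1)$ and $0$ elsewhere replaces $z$ by another cocycle in the class $\zeta$ whose components differ from those of $z$ only at the spots $(p+s,q-s)$ and $(p+s+1,q-s-1)$; in particular $z^{p+j,q-j}$ is still $0$ for $0\le j<s$ and is now also $0$ for $j=s$. Boundedness of $\X$ makes this stop after finitely many steps, leaving a cocycle --- again called $z$, still representing $\zeta$ --- with $z^{p',q'}=0$ for all $q'\le q$.

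The second half is the mirror image, clearing the entries with $q'>q$ outward from $(p-1,q+1)$ and using the hypothesis on horizontal homology: if $z^{p-j,q+j}=0$ for $0\le j<s$, the transport identity at $(a,b)=(p-s+1,q+s)$ gives $d' z^{p-s,q+s}=\pm\, d'' z^{p-s+1,q+s-1}=0$, so $z^{p-s,q+s}$ is a horizontal cycle at $X^{p-s,q+s}$ with $(p-s)+(q+s)=n$ and $q+s>q$, hence a horizontal boundary; one subtracts the corresponding $D(w')$. The key point is that now $w'$ lives in a spot with second coordinate $q+s\ge q+1$, so $D(w')$ is supported in spots with second coordinate $\ge q+1$ and therefore disturbs neither the entry $(p,q)$ nor any of the entries with $q'\le q$ cleared in the first stage. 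Again boundedness forces termination, and at the end every component of the cocycle representing $\zeta$ is zero, so $\zeta=0$.

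This is really just bookkeeping, and the only place where care is genuinely needed --- the one spot where an error could slip in --- is verifying at each peeling step precisely which components of $z$ are altered when one subtracts $D(w')$, so as to be certain that the previously cleared entries, and the given zero entry $(p,q)$, remain zero. One could alternatively deduce the statement from the two spectral sequences of the double complex, but the direct chase is cleaner and avoids tracking edge homomorphisms.
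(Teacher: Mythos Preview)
Your staircase chase is correct and is exactly the standard argument; each inductive step indeed alters only the two anticipated entries, so the previously cleared components (in particular the given zero at $(p,q)$) are never disturbed. Note, however, that the paper does not supply its own proof of this lemma at all: it simply quotes the statement from \cite[2.6]{EHU} and uses it as a black box. The proof in \cite{EHU} is the same diagram chase you carry out, so there is no methodological difference to discuss --- you have reproduced the original argument.
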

\begin{remark}
In \cite{EHU} note that in  a double complex $\X = \bigoplus_{p,q} X^{pq}$ the $p$ is the $y$-coordinate and $q$ is the $x$-coordinate. We hold the standard convention i.e., in a bicomplex $\X = \bigoplus_{p,q} X^{pq}$ the $x$-coordinate is $p$ and $y$-coordinate is $q$.
\end{remark}
\section{A spectral sequence}
Let $S= K[X_1,\ldots,X_n]$.
Let $\D \colon \cdots D_1 \rt D_0 \rt 0$ be a complex of finitely generated free $S$-modules.
Let $\I \colon  0 \rt I^0 \rt I^1 \rt \cdots$ be an injective resolution of $S$.
We consider the Hom co-chain complex $\C = \Hom(\D, \I)$; see \cite[2.7.4]{Weibel}.
Set $\D^* = \Hom(\D, S)$.
\begin{lemma}\label{first}
The first spectral sequence on $\C$ collapses and hence $H^i(\Tot(\C)) = H^i(\D^*)$
\end{lemma}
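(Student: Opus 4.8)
The plan is to compute the $E_1$ and $E_2$ pages of the first spectral sequence of the double complex $\C$, where $\C^{p,q} = \Hom(D_p, I^q)$, regarded as a first-quadrant double cochain complex: the horizontal differential $\C^{p,q} \rt \C^{p+1,q}$ is induced by $D_{p+1} \rt D_p$, and the vertical differential $\C^{p,q} \rt \C^{p,q+1}$ is induced by $I^q \rt I^{q+1}$. Since $\D$ is concentrated in non-negative homological degrees and $\I$ in non-negative cohomological degrees, $\C$ is a first-quadrant bicomplex; in particular $\Tot(\C)^n = \bigoplus_{p+q=n}\Hom(D_p, I^q)$ is a finite direct sum in each total degree, so both spectral sequences of $\C$ converge to $H^\bullet(\Tot(\C))$ and the question of $\Tot$-as-sum versus $\Tot$-as-product does not arise.

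The first spectral sequence is the one obtained by first taking cohomology in the vertical ($\I$) direction. The key point is that each $D_p$ is a finitely generated free, hence projective, $S$-module, so the functor $\Hom(D_p,-)$ is exact. Therefore, for fixed $p$,
\[
H^q\bigl(\Hom(D_p,\I)\bigr) \cong \Hom\bigl(D_p, H^q(\I)\bigr).
\]
As $\I$ is an injective resolution of $S$, we have $H^q(\I) = S$ for $q=0$ and $H^q(\I)=0$ for $q>0$. Hence the $E_1$ page is concentrated in the single row $q=0$, where its term in column $p$ is $\Hom(D_p, S)$, the degree-$p$ component of $\D^*$.

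Next I would identify the $d_1$ differential on that row: it is induced by $\Hom(D_{p+1}\rt D_p, S)$, which is precisely the differential of $\D^* = \Hom(\D, S)$. Thus $E_2^{p,0} = H^p(\D^*)$ while $E_2^{p,q}=0$ for $q>0$. Since $E_2$ is supported on one row, every higher differential $d_r$ ($r\geq 2$) has source or target zero, so the spectral sequence collapses at $E_2$ and $E_\infty^{p,0}=H^p(\D^*)$. Convergence then gives a finite filtration of $H^n(\Tot(\C))$ with associated graded $\bigoplus_{p+q=n} E_\infty^{p,q} = E_\infty^{n,0} = H^n(\D^*)$, and since only one graded piece is nonzero this yields $H^n(\Tot(\C)) \cong H^n(\D^*)$.

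There is no serious obstacle here. The only things needing care are (i) making sure the correct (``first'') filtration is used, namely the one taking $\I$-direction cohomology first, so that projectivity of the $D_p$ forces degeneration, rather than the filtration that first takes cohomology of $\D$; and (ii) checking that the resulting $E_2$ differential is genuinely the differential of $\D^*$ and not merely something of the right shape, which is immediate from the functoriality of $\Hom(-,S)$.
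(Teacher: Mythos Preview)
Your argument is correct and follows essentially the same route as the paper: compute the $E_1$ page by taking cohomology in the $\I$-direction first, use that each $D_p$ is free so $\Hom(D_p,-)$ commutes with cohomology, obtain a single nonzero row $E_1^{p,0}=\Hom(D_p,S)$, and read off $E_2^{p,0}=H^p(\D^*)$. Your write-up simply makes explicit the convergence and filtration details that the paper leaves implicit.
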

\begin{proof}
${}^{I}E^{pq}_0 = \Hom_S(D_p, I^q)$. So we get
\begin{align*}
{}^{I}E^{pq}_1 &= H^q(\Hom(D_p, \I)) \\
               &= \Hom(D_p, H^q(\I)) \\
               &= \begin{cases}0 & \text{for} \ q > 0; \\ \Hom(D_p, S) &\text{for} \ q = 0.   \end{cases}
\end{align*}
Therefore we get
\begin{align*}
{}^{I}E^{pq}_2 &= \begin{cases}0 & \text{for} \ q > 0; \\ H^p(\Hom(\D, S)) &\text{for} \ q = 0.   \end{cases} \\
                &= \begin{cases}0 & \text{for} \ q > 0; \\ H^p(\D^*) &\text{for} \ q = 0.   \end{cases}
 \end{align*}
\end{proof}
We now compute the $E_2$ term of the second standard spectral sequence on $\C$.
\begin{lemma}\label{second}
${}^{II}E^{pq}_2 = \Ext^p_S(H_q(\D), S)$
\end{lemma}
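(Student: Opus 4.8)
The plan is to run the second spectral sequence of the double complex $\C = \Hom(\D, \I)$ in the order opposite to the one used for Lemma~\ref{first}: first take homology along the differential coming from $\D$, then along the differential coming from the injective resolution $\I$. Following the standard convention under which the second spectral sequence of a double complex is the first spectral sequence of its transpose, we have ${}^{II}E^{pq}_0 = \Hom_S(D_q, I^p)$, where the $p$-direction carries the differential induced by $\I$ and the $q$-direction the one induced by $\D$. Since each $D_q$ and each $I^p$ sits in nonnegative homological (resp. cohomological) degree, $\C$ is a first-quadrant double complex, so every antidiagonal $p+q=n$ is a finite direct sum; hence the spectral sequence is bounded and its early pages are computed as iterated (co)homology in the usual way. (One could also invoke $\injdim_S S = n$ to see that $\I$, and hence $\C$, is bounded on the $I$-side as well, but this is not needed.)

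First I would identify the ${}^{II}E_1$ page. Fixing the $\I$-degree $p$, the relevant row is the complex $\Hom_S(\D, I^p)$, and we must take its homology in the $\D$-direction. Because $I^p$ is an injective $S$-module, the contravariant functor $\Hom_S(-, I^p)$ is exact, and an exact functor commutes with the formation of (co)homology; so the natural map $\Hom_S(H_q(\D), I^p) \to H^q(\Hom_S(\D, I^p))$ is an isomorphism. Therefore ${}^{II}E^{pq}_1 = \Hom_S(H_q(\D), I^p)$, and the differential on this page is the one induced by the differential of $\I$.

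Next I would pass to ${}^{II}E_2$ by taking cohomology of the complex $\Hom_S(H_q(\D), \I)$ along the differential of $\I$, for each fixed $q$. Since $\I$ is an injective resolution of $S$, this cohomology is by definition $\Ext^p_S(H_q(\D), S)$. Combining the two steps yields ${}^{II}E^{pq}_2 = \Ext^p_S(H_q(\D), S)$, as claimed.

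The computation is short, and I do not expect a genuine obstacle; the points that require care are bookkeeping ones. One is to verify that the induced differential on ${}^{II}E_1$ really is (induced by) the differential of $\I$, with no extra sign or cross-term from the $\D$-differential, so that ${}^{II}E_2$ is literally an $\Ext$ group and not merely a subquotient of one. The other is to keep the two coordinates straight so that the $\Ext$-degree comes out as $p$ and the module as $H_q(\D)$. The two substantive inputs are the injectivity of the modules $I^p$, which powers the $E_1$ computation, and the fact that $\I$ resolves $S$, which powers the $E_2$ computation.
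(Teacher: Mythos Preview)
Your proposal is correct and follows essentially the same argument as the paper: compute ${}^{II}E^{pq}_0 = \Hom_S(D_q, I^p)$, use injectivity of $I^p$ to identify ${}^{II}E^{pq}_1 = \Hom_S(H_q(\D), I^p)$, and then use that $\I$ resolves $S$ to obtain ${}^{II}E^{pq}_2 = \Ext^p_S(H_q(\D), S)$. The additional remarks you make about boundedness, differentials, and bookkeeping are accurate but not strictly needed for this short computation.
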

\begin{proof}
${}^{II}E^{pq}_0 = \Hom(D_q, I^p) $. So
\begin{align*}
{}^{I}E^{pq}_1 &= H^q(\Hom(\D, I^p)) \\
               &= \Hom(H_q(\D), I^p)
\end{align*}
The second equality is since $I^p$ is injective. Thus we get
\begin{align*}
{}^{II}E^{pq}_2 &= H^p \Hom(H_q(\D), \I) \\
                &= \Ext^p_S(H_q(\D), S).
\end{align*}
\end{proof}

The complex $\D$ we will consider is the following:\\
Let $\bx = x_1,\ldots,x_l$   a  system of homogeneous, not-necessarily minimal, generators of $I$ and let $\K(\bx)$ be the Koszul complex with respect  to $\bx$.  Assume
$|x_1| \geq |x_2| \geq \ldots \geq |x_l|$. Here $|x|$ denotes degree of $x$.
Let $\F \colon \cdots F_1 \rt F_0 \rt 0$ be a minimal resolution of $M$. Set $\D = \Tot(\K(\bx) \otimes \F)$.  Notice $\D^* = \Tot(\Kc\otimes \F^*)$.

\begin{lemma}\label{com-D}
$H_i(\D) = H_i(\bx, M)$
\end{lemma}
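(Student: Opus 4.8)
The plan is to analyze the standard spectral sequence of the double complex underlying $\D$. Write $X_{p,q} = K_p(\bx) \otimes_S F_q$, so that $\D = \Tot(X_{\bullet\bullet})$ with $\D_n = \bigoplus_{p+q=n} X_{p,q}$. This is a bounded (in fact first-quadrant) double complex of graded free $S$-modules whose differentials all have internal degree $0$, so there are no convergence issues and every identification below is one of graded $S$-modules with degree-preserving maps.

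First I would take homology along $\F$, i.e. in the $q$-direction. Since each $K_p(\bx)$ is a free, hence flat, $S$-module, the functor $K_p(\bx) \otimes_S -$ is exact, and therefore
\[
H_q\bigl(K_p(\bx) \otimes_S \F\bigr) \cong K_p(\bx) \otimes_S H_q(\F).
\]
As $\F$ is a resolution of $M$, the right-hand side equals $K_p(\bx) \otimes_S M$ for $q = 0$ and vanishes for $q > 0$. Hence the $E^1$-page is concentrated in the single row $q = 0$, where the surviving horizontal differential is exactly the Koszul differential on $\K(\bx) \otimes_S M$.

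Because the $E^1$-page is supported on one row, the spectral sequence degenerates at $E^2$: all higher differentials vanish and there are no extension problems. Consequently
\[
H_n(\D) = H_n\bigl(\Tot(X_{\bullet\bullet})\bigr) \cong E^2_{n,0} = H_n\bigl(\K(\bx) \otimes_S M\bigr) = H_n(\bx, M),
\]
which is the assertion. Equivalently, one may note that $\F \otimes_S \K(\bx)$ represents $M \otimes^{\mathbf{L}}_S \K(\bx)$, and since $\K(\bx)$ is a bounded complex of flat modules this agrees with $M \otimes_S \K(\bx)$ in the derived category; the spectral sequence above is merely the concrete incarnation of this fact.

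There is essentially no obstacle here: the only inputs are the flatness of the free modules $K_p(\bx)$ and the automatic convergence coming from the boundedness of the double complex. The one point I would be careful to record is that the isomorphism is one of \emph{graded} $S$-modules respecting the internal degree, since that is exactly what is needed when this identification is fed into the regularity estimates in the later sections.
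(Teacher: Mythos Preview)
Your proof is correct and follows essentially the same route as the paper: both take the spectral sequence of the double complex $K_p\otimes F_q$, compute vertical homology first (the paper phrases this as $\Tor^S_q(K_p,M)$, you phrase it via flatness of $K_p$), observe the $E^1$-page is concentrated on the row $q=0$, and read off $H_i(\D)=H_i(\bx,M)$ from degeneration at $E^2$. Your additional remarks on the graded structure and the derived-category interpretation are not in the paper but are harmless embellishments.
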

\begin{proof}
We use the first standard spectral sequence on $\K(\bx) \otimes \F$ to compute the homology of $\D$.
Set
$E^0_{pq} = K_p \otimes F_q$. So
\begin{align*}
E^1_{pq} &= H_q(K_p \otimes \F) \\
&= \Tor^S_q(K_p, M) \\
          &= \begin{cases} 0 & \text{for} \ q > 0; \\ K_p\otimes M & \text{for} \ q = 0.    \end{cases}
\end{align*}
Therefore we get
\[
E^2_{pq} = \begin{cases} 0 & \text{for} \ q > 0; \\  H_p(\bx,M)  & \text{for} \ q = 0.    \end{cases}
\]
Thus $H_i(\D) = H_i(\bx, M)$.
\end{proof}

\section{Proof of Theorem \ref{m1}}
In this section we prove Theorem \ref{m2}. We restate it for the convenience of the reader.
We assume that $I$ is a homogeneous ideal in $S$ of grade $g$. The module $M$ is perfect of grade $n-g$. We also assume that
$\ell(M/IM)$ is finite. Here $\ell(N)$ is the length  of the $S$-module $N$.
Notice that $\ell( H_i(\bx, M))$ is also finite for all $i \geq 0$.
Recall we are assuming $|x_1| \geq |x_2| \geq \cdots \geq |x_l|$.
\begin{theorem}\label{mt-1}
$\reg H_k(\bx, M) \leq \sum_{i=1}^{g+k}|x_i|   + t_{n-g}(M) - n$.
\end{theorem}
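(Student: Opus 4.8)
The plan is to run the double complex $\C = \Hom(\D, \I)$ with $\D = \Tot(\K(\bx)\otimes\F)$ through both spectral sequences from Section 3, and then to convert $\Ext$ information into regularity bounds via duality. By Lemma \ref{first} we have $H^i(\Tot(\C)) = H^i(\D^*)$, where $\D^* = \Tot(\Kc \otimes \F^*)$; since $M$ is perfect of grade $n-g$, the resolution $\F$ has length $n-g$ and $\F^*$ is (up to shift) a resolution of $\Ext^{n-g}_S(M,S) =: \omega$, a perfect module of grade $n-g$. The Koszul complex $\Kc$ on $\bx$ is self-dual up to the shift by $\sum_{i=1}^{l}|x_i|$, so $H^i(\D^*)$ computes, roughly, the Koszul homology $H_{?}(\bx, \omega)$ with a degree twist. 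The second spectral sequence (Lemma \ref{second}) has $E_2^{pq} = \Ext^p_S(H_q(\D), S) = \Ext^p_S(H_q(\bx,M), S)$ by Lemma \ref{com-D}. Since $\ell(M/IM) < \infty$, each $H_q(\bx,M)$ has finite length, so $\Ext^p_S(H_q(\bx,M),S)$ is nonzero only for $p = n$ and there equals the Matlis-type dual (graded $K$-dual up to a shift by $n$). Hence this spectral sequence also collapses, onto the column $p = n$.

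The two collapsed spectral sequences force an isomorphism $H^{n+q}(\Tot(\C)) \cong \Ext^n_S(H_q(\bx,M), S)$, i.e. $H^i(\D^*) \cong \Ext^n_S(H_{i-n}(\bx,M),S)$ for every $i$. The left-hand side I will identify, using the self-duality of $\Kc$ and $\F^* \simeq \F(\omega)[-(n-g)]$ (shifted by the twist $\sum|x_i|$ coming from $K^l$), with a Koszul homology module of $\omega$: concretely $H^i(\D^*)$ should come out as $H_{n-i}(\bx, \omega)$ twisted by $\sum_{i=1}^l |x_i|$ (I will have to track the homological and internal degree shifts carefully). Combining, $\Ext^n_S(H_k(\bx,M),S) \cong H_{\,l-g-k}(\bx,\omega)\big(\sum_{i=1}^l|x_i|\big)$ or some variant of this — the precise homological index is one of the bookkeeping points to nail down, using $\grade I = g$ and perfectness.

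From there regularity is extracted as follows. Because $H_k(\bx,M)$ has finite length, $\Ext^n_S(H_k(\bx,M),S) = \Hom_K(H_k(\bx,M),K)(n)$, so $\reg H_k(\bx,M) = -\operatorname{indeg}\bigl(\Ext^n_S(H_k(\bx,M),S)\bigr) - n$, where $\operatorname{indeg}$ is the least degree of a nonzero component. Thus it suffices to bound below the initial degree of the relevant Koszul homology $H_j(\bx,\omega)$ of the dual module. That homology is a subquotient of $\Kc_j \otimes \omega = \bigl(\bigwedge^j S^l\bigr)^*(\text{shift})\otimes\omega$; reading off the generating degrees of the Koszul terms (each wedge contributes $-\sum$ of $j$ of the largest $|x_i|$, and since $|x_1|\ge\cdots\ge|x_l|$ the extreme contribution is $\sum_{i=1}^{g+k}|x_i|$ after re-indexing) together with $\operatorname{indeg}\omega \ge -t_{n-g}(M)$ — this is exactly $\reg$ applied to the top nonzero $\Tor$, since $\omega = \Ext^{n-g}_S(M,S)$ is resolved by $\F^*$ and its generator degrees are the $-j$ with $b_{n-g,j}(M)\neq 0$ — yields the stated bound $\sum_{i=1}^{g+k}|x_i| + t_{n-g}(M) - n$.

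The main obstacle I anticipate is purely the bookkeeping of the three competing gradings: the homological degrees in $\K$ versus $\F$, the internal $S$-degrees carried by the twists $S(-\sum|x_i|)$ in $\Kc$ and by the Betti numbers of $M$, and the two different spectral-sequence indexings (the $(p,q)$ of Lemmas \ref{first}--\ref{second} versus the homological grading of $\D$). In particular one must be sure the collapse of ${}^{II}E$ onto $p=n$ really does line up with the edge map so that $H^i(\D^*) \cong \Ext^n_S(H_{i-n}(\bx,M),S)$ with no further filtration quotients (it does, since only one nonzero column survives), and one must correctly locate which $H_j(\bx,\omega)$ is dual to $H_k(\bx,M)$. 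Lemma \ref{ehu} is the tool that justifies reading off the answer from a single spot of the total complex when needed. Once the indices are pinned down, the passage from $\operatorname{indeg}$ of a Koszul-homology subquotient to the displayed sum of the $|x_i|$ is immediate from the structure of the Koszul terms, and the $\omega$-contribution is exactly $t_{n-g}(M)$ by definition of regularity applied to $\F^*$.
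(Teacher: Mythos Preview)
Your proposal is correct and the overall architecture---run both spectral sequences on $\C=\Hom(\D,\I)$, use the collapse of ${}^{II}E$ (finite length of $H_q(\bx,M)$) to get $\Ext^n_S(H_k(\bx,M),S)\cong H^{n+k}(\D^*)$, then convert via graded local duality to $\reg H_k(\bx,M)=-\mindeg H^{n+k}(\D^*)-n$---matches the paper exactly.

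Where you diverge from the paper is in the final step, bounding $\mindeg H^{n+k}(\D^*)$. The paper does not identify $H^{n+k}(\D^*)$ with any Koszul homology of $\omega=\Ext^{n-g}_S(M,S)$. Instead it works directly on the bicomplex $\X=\K^*\otimes\F^*$ and applies Lemma~\ref{ehu}: for a cycle $z=(z^{p,q})$ with $p+q=n+k$, one checks that in degrees below $-\sum_{i=1}^{g+k}|x_i|-t_{n-g}(M)$ the component $z^{g+k,n-g}$ is forced to vanish by a mindeg count on $K^*_{g+k}\otimes F^*_{n-g}$, that vertical homology vanishes for $q<n-g$ (since $\Ext^q_S(M,S)=0$), and that horizontal homology vanishes for $q>n-g$ (since $F^*_q=0$); Lemma~\ref{ehu} then kills the class. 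Your route instead uses the perfectness of $M$ to collapse the \emph{other} spectral sequence on $\X$ as well (vertical homology concentrated at $q=n-g$), obtaining $H^{n+k}(\D^*)\cong H^{g+k}(\K^*\otimes\omega)\cong H_{l-g-k}(\bx,\omega)\bigl(\sum_{i=1}^{l}|x_i|\bigr)$, and then bounds the initial degree of this as a subquotient of $K^*_{g+k}\otimes\omega$. Both land on the same term $K^*_{g+k}\otimes F^*_{n-g}$ (or its quotient $K^*_{g+k}\otimes\omega$) and give the identical bound. Your version is a bit more conceptual---it actually names the duality $H_k(\bx,M)^\vee\cong H_{l-g-k}(\bx,\omega)(\text{twist})$---and in fact makes Lemma~\ref{ehu} unnecessary here; the paper's version is quicker and, more importantly, its use of Lemma~\ref{ehu} is what carries over to the non-perfect situation of Theorem~\ref{m2}, where the vertical spectral sequence on $\X$ no longer collapses. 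Your bookkeeping worries are justified but routine; the index $j=l-g-k$ and the twist $\sigma=\sum_{i=1}^{l}|x_i|$ combine exactly as you predict.
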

\begin{proof}
We consider the second spectral sequence  on $\C$. By \ref{second}  and \ref{com-D} we have
 ${}^{II}E^{pq}_2 = \Ext^p_S(H_q(\bx, M), S)$
Since $\ell( H_i(\bx, M))$ is also finite for all $i \geq 0$ we have
that ${}^{II}E^{pq}_2 = 0$ for all $p \neq n$. So the second spectral sequence on $\C$ collapses. By
\ref{second} it follows that
\begin{equation*}
\Ext^n_S(H_k(\bx, M), S) = H^{n+k}(\D^*) \quad \text{for all}\ k\geq 0. \tag{$\dagger$}
\end{equation*}
By local duality we have
\[
H_k(\bx, M) = \Hom_K\left(\Ext^n_S(H_k(\bx, M), S),  K \right)(n)
\]
So by ($\dagger$) we get that
\[
\reg H_k(\bx, M) = - \mindeg H^{n+k}(\D^*)  -n
\]
Notice $\D^* = \Tot(\K^* \otimes \F^*)$.
We prove that any homogenous element $\xi$ in $H^{n+k}(\D^*)$ of
\[
\deg \xi < - \sum_{i=1}^{g+k}|x_i|   - t_{n-g}(M)
\]
is zero.

Let $\xi = [z]$ where $z = (z^{p,q} \mid p+q = n+k)$ is a cycle.

Claim 1: $z^{g+k, n-g} = 0$.
\begin{align*}
\mindeg K^*_{g + k} \otimes F^*_{n-g} &=  \mindeg K^*_{g+k}  + \mindeg F^*_{n-g}  \\
                    &=   - \sum_{i=1}^{g+k}|x_i|  - t_{n-g}(M)  \\
                    &> \deg \xi
\end{align*}
So $z^{ g+k, n-g} = 0$.

Set $\X =  \K^* \otimes \F^*$. We look at the vertical and horizontal homology of this bi-complex.

Claim 2. Vertical homology of $\X$ is
   zero for $q < n-g$.

   Notice $(H^q(\X))^p = K_p^* \otimes \Ext_S^{q}(M,S)  = 0$.

Claim 3.  Horizontal homology of $\X$
   zero for $q > n -g$.

    Note $n-g  = \projdim M$. So $F^*_q = 0$.
   Notice
   \[
   (H^p(\X))^q = H^p(\K^*) \otimes F_q^* = 0.
   \]

The result now follows from \ref{ehu}.
\end{proof}
Recall we say that an ideal $I$ is strongly Cohen-Macaulay if all the Koszul-homology modules of $M$ are \CM.
\begin{corollary}
If $I$ is strongly \CM \ ideal of grade $g$ then
$$ \reg H_k(\bx) \leq \sum_{i=1}^{g+k}|x_i|   - g. $$
\end{corollary}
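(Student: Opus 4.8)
The plan is to deduce this from Theorem~\ref{mt-1} by cutting $S$ down to the zero-dimensional situation that theorem requires. The structural input is that, since $I$ is strongly \CM, every nonzero Koszul homology module $H_j(\bx)$ is a \emph{maximal} \CM\ $S/I$-module, equivalently $\dim H_j(\bx)=\dim S/I$ whenever $H_j(\bx)\neq 0$; this is a well-known feature of strongly \CM\ ideals, cf.\ \cite{H}, and in particular $S/I=H_0(\bx)$ is \CM, hence equidimensional, so $\dim S/I=n-g$. One verifies the maximal Cohen--Macaulay property first for a minimal generating set and then for the given $\bx$. Up to reordering, $\bx$ arises from a minimal generating set by successively adjoining generators that already lie in the ideal; each such adjunction enlarges the Koszul homology only by (shifted) extensions of the previous ones, since multiplication by the new generator is zero on all Koszul homology, and an extension of maximal \CM\ $S/I$-modules is again maximal \CM\ by the depth lemma.

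Next choose $n-g$ general linear forms $\ell_1,\dots,\ell_{n-g}$ (after enlarging $K$ to an infinite field if necessary, which alters no regularity) and put $J=(\ell_1,\dots,\ell_{n-g})$. By genericity $\ell_1,\dots,\ell_{n-g}$ is a regular sequence on $S/I$ and on each $H_j(\bx)$, and $I+J$ is $S_+$-primary. Tensoring the complex of free $S$-modules $\K(\bx)$ with the short exact sequences $0\to S(-1)\xrightarrow{\ell_i}S\to S/(\ell_i)\to 0$ one form at a time, each connecting homomorphism in the resulting long exact homology sequence maps onto the kernel of multiplication by $\ell_i$ on the next Koszul homology module; as $\ell_i$ is a nonzerodivisor there, that kernel is zero, so the connecting maps vanish. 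Inducting on the number of forms adjoined one obtains
\[
H_k(\bx,S/J)\;\cong\;H_k(\bx)/J\,H_k(\bx),
\]
and $\ell_1,\dots,\ell_{n-g}$ is an $H_k(\bx)$-regular sequence. Proposition~\ref{mod-reg} then yields $\reg H_k(\bx)=\reg\bigl(H_k(\bx)/J\,H_k(\bx)\bigr)=\reg H_k(\bx,S/J)$.

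It remains to bound $\reg H_k(\bx,S/J)$, which is a direct application of Theorem~\ref{mt-1} to $M=S/J$. This $M$ is a perfect $S$-module of grade $n-g$, its minimal free resolution being the Koszul complex on $\ell_1,\dots,\ell_{n-g}$; moreover $\ell(M/IM)=\ell\bigl(S/(I+J)\bigr)<\infty$, and $t_{n-g}(M)=\reg\Tor^S_{n-g}(S/J,K)=\reg\bigl(K(-(n-g))\bigr)=n-g$. Hence
\[
\reg H_k(\bx,S/J)\;\le\;\sum_{i=1}^{g+k}|x_i|+t_{n-g}(S/J)-n\;=\;\sum_{i=1}^{g+k}|x_i|-g,
\]
and combining with the previous paragraph proves the corollary. (For $k>l-g$ the module $H_k(\bx)$ vanishes and there is nothing to prove.)

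The step I expect to be the crux is the first one, that each nonzero $H_j(\bx)$ is maximal \CM\ over $S/I$: this is precisely the extra mileage the strongly \CM\ hypothesis gives beyond $S/I$ being \CM, and it is what makes $\ell_1,\dots,\ell_{n-g}$ a regular sequence on all of the Koszul homology at once, so that both the isomorphism $H_k(\bx,S/J)\cong H_k(\bx)/JH_k(\bx)$ and the equality of regularities go through.
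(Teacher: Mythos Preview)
Your proof is correct and follows essentially the same route as the paper: reduce to $M=S/(\ell_1,\dots,\ell_{n-g})$ for general linear forms regular on every $H_j(\bx)$, then apply Theorem~\ref{mt-1} together with Proposition~\ref{mod-reg}. You supply more detail than the paper does---in particular the passage from a minimal to an arbitrary generating set, the explicit isomorphism $H_k(\bx,S/J)\cong H_k(\bx)/JH_k(\bx)$ via the long exact sequence, and the computation $t_{n-g}(S/J)=n-g$---all of which the paper leaves implicit.
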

\begin{proof}
We may assume that $K$ is an infinite field.
All  non-zero
 $H_i(\bx)$ have the same dimension as $A = S/I$ (see, e.g., \cite[4.2.2]{Vas}). Let $g = \grade(I)$. Since $I$ is strongly \CM \ we may choose $y_1,\ldots,y_{n-g}$ linear forms in $S$ which are $H_i(\bx)$ regular for $i = 0,\ldots,l-g$. Set $M = S/(\by)$. The result now follows from
Theorem \ref{mt-1} and \ref{mod-reg}.
\end{proof}
An easy consequence of the above result is the following result regarding regularity of zero-dimensional ideals.
\begin{corollary}
If $I$ is a zero dimensional ideal then
$$ \reg H_k(\bx) \leq \sum_{i=1}^{n+k}|x_i|   - n.  $$
\end{corollary}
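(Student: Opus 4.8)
The plan is to obtain this as an immediate special case of the preceding corollary, so the work reduces to checking two elementary facts about a zero-dimensional graded ideal $I$. First I would note that $\grade I = n$: since $S$ is \CM\ and $\dim S/I = 0$, we have $\height I = n$, and $\grade I = \height I = n$ because $S$ is \CM. Second I would verify that $I$ is strongly \CM. Each Koszul homology module $H_i(\bx)$ is annihilated by $(\bx) = I$, hence is a module over the Artinian ring $S/I$ and therefore has dimension $\le 0$; any module of dimension zero (the zero module included) is \CM, so all the $H_i(\bx)$ are \CM, which is exactly the strong \CM\ condition for $I$.

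With these two observations recorded, the proof finishes by invoking the previous corollary with $g = n$: it gives $\reg H_k(\bx) \le \sum_{i=1}^{g+k}|x_i| - g = \sum_{i=1}^{n+k}|x_i| - n$, as desired. (Alternatively one could unwind the argument of that corollary in this case: here one takes $M = S$ in Theorem~\ref{mt-1}, which is perfect of grade $0 = n-g$, with $\ell(S/I)$ finite and $t_0(S) = 0$, so Theorem~\ref{mt-1} directly yields the stated bound. This shows the bound already appears at the level of Theorem~\ref{mt-1} without needing Proposition~\ref{mod-reg}.)

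There is no real obstacle here: the statement is genuinely a corollary, and the only substantive inputs are the previous corollary (or Theorem~\ref{mt-1}) together with the two routine remarks that $\grade I = n$ and that $I$ is trivially strongly \CM\ in the zero-dimensional case. The one point worth a sentence of care is the dimension bound on the $H_i(\bx)$, i.e.\ that they are modules over the Artinian quotient $S/I$; everything else is bookkeeping.
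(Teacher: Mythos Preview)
Your proposal is correct and follows exactly the route the paper indicates: the paper states this as ``an easy consequence of the above result'' (the strongly \CM\ corollary) and has already remarked in the introduction that a zero-dimensional ideal is trivially strongly \CM, so your verification that $\grade I = n$ and that each $H_i(\bx)$ is Artinian (hence \CM) simply fills in those omitted details. Your alternative via Theorem~\ref{mt-1} with $M = S$ is also valid and in the same spirit.
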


Next we discuss a curious result.
\begin{corollary}
Let $I$ be a \CM \ ideal. Then
$$\reg S/I \leq \sum_{i=1}^{g}|x_i|   - g. $$
\end{corollary}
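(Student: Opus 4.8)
The plan is to reduce to the zero-dimensional situation already handled by Theorem \ref{mt-1}, exploiting that $H_0(\bx, M) = M/IM$ for every graded $S$-module $M$: a bound on $\reg H_0(\bx, M)$ for a suitable artinian reduction $M$ of $S/I$ will give the desired bound on $\reg S/I$.

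First I would pass to the case where $K$ is infinite, which affects neither $\reg S/I$, nor $g = \grade I$, nor the degrees $|x_i|$, since all of these are stable under the flat base change $S \to \ov{K}\otimes_K S$. Since $I$ is \CM, the ring $S/I$ is \CM \ of dimension $d := n - g$, so with $K$ infinite we may choose linear forms $y_1, \ldots, y_d \in S$ forming a linear system of parameters for $S/I$. Being a system of parameters on the \CM \ module $S/I$, the sequence $\by = y_1, \ldots, y_d$ is $S/I$-regular; taken linearly independent, it also extends to a coordinate system of $S$ and hence is $S$-regular.

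Next I would set $M := S/(\by)$. Then $M$ is resolved by the Koszul complex $\K(\by)$, so $\projdim M = d = \grade M$, i.e.\ $M$ is perfect of grade $n - g$; the top term of this Koszul resolution is $S(-d)$, whence $\Tor^S_d(M, K) = K(-d)$ and $t_{n-g}(M) = d = n - g$. Moreover $M/IM = (S/I)/(\by)(S/I)$ has dimension $\dim S/I - d = 0$, so $\ell(M/IM)$ is finite and the hypotheses of Theorem \ref{mt-1} are satisfied. Applying that theorem with $k = 0$ gives
\[
\reg M/IM = \reg H_0(\bx, M) \leq \sum_{i=1}^{g}|x_i| + t_{n-g}(M) - n = \sum_{i=1}^{g}|x_i| - g .
\]
Finally, Proposition \ref{mod-reg} applied to the $S/I$-regular sequence of linear forms $\by$ gives $\reg M/IM = \reg (S/I)/(\by)(S/I) = \reg S/I$, which completes the proof. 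No step here is genuinely difficult; the only point needing a little care is the existence of a linear system of parameters for $S/I$, which is precisely where the reduction to an infinite field is used.
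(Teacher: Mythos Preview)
Your proof is correct and follows essentially the same approach as the paper: reduce to an infinite field, choose linear forms $\by = y_1,\ldots,y_{n-g}$ that are $S/I$-regular, set $M = S/(\by)$, and combine Theorem~\ref{mt-1} for $k=0$ with Proposition~\ref{mod-reg}. Your write-up is in fact more explicit than the paper's, spelling out that $t_{n-g}(M)=n-g$ and why $\ell(M/IM)$ is finite; the paper additionally asks that $\by$ be a system of parameters for the higher $H_i(\bx)$, but that extra condition plays no role for the $k=0$ conclusion.
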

\begin{proof}
We may assume that $K$ is an infinite field.
All  non-zero
 $H_i(\bx)$ have the same dimension as $A = S/I$ (see, e.g., \cite[4.2.2]{Vas}). Let $g = \grade(I)$. Since $I$ is  \CM \ we may choose $\by = y_1,\ldots,y_{n-g}$ linear forms in $S$ which are $H_0(\bx) = S/I$ regular and a system of parameters for $H_i(\bx)$ for $i = 1,\ldots,l-g$. Set $M = S/(\by)$. The result now follows from
Theorem \ref{mt-1} and \ref{mod-reg}.
\end{proof}
\section{Proof of Theorem \ref{m2}}
In this section we prove Theorem \ref{m2}. We restate it for the convenience of the reader.
\begin{theorem}
Let $I$ be a zero-dimensional graded ideal in $S$ and let $M$ be a finitely generated graded $S$-module. Then for $k \leq \codim M$,
$$\reg H_k(\bx, M) \leq \sum_{i=1}^{n}|x_i| + t_k(M)  - n    $$
\end{theorem}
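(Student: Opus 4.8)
The plan is to mimic the proof of Theorem \ref{mt-1}, again running the second spectral sequence on $\C = \Hom(\D, \I)$ with $\D = \Tot(\K(\bx)\otimes \F)$, but now the roles of $I$ and $M$ are swapped: it is $I$ (not $M$) that is zero-dimensional, so $S/I$ has finite length, and therefore every non-zero Koszul homology module $H_q(\bx, M)$ is annihilated by $I$ and hence has finite length as well. By Lemma \ref{second} and Lemma \ref{com-D} we have ${}^{II}E^{pq}_2 = \Ext^p_S(H_q(\bx, M), S)$, and finiteness of length forces this to vanish unless $p = n$. So the second spectral sequence collapses and, exactly as before, $\Ext^n_S(H_k(\bx,M), S) = H^{n+k}(\D^*)$ with $\D^* = \Tot(\K^* \otimes \F^*)$. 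Local duality then gives $\reg H_k(\bx, M) = -\mindeg H^{n+k}(\D^*) - n$, so it suffices to show that any homogeneous $\xi \in H^{n+k}(\D^*)$ with $\deg \xi < -\sum_{i=1}^{n}|x_i| - t_k(M)$ is zero.

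To kill such a $\xi = [z]$ via Lemma \ref{ehu}, I would try the bidegree $(n, k)$ in the bicomplex $\X = \K^* \otimes \F^*$: the component $K^*_n \otimes F^*_k$ has $\mindeg = -\sum_{i=1}^{n}|x_i| + \mindeg F^*_k = -\sum_{i=1}^{n}|x_i| - t_k(M) > \deg\xi$, so $z^{n,k} = 0$ automatically. It remains to check the two homology-vanishing hypotheses of Lemma \ref{ehu} at total degree $n+k$. For the horizontal homology of $\X$ we need vanishing at bidegrees $(p', q')$ with $p' + q' = n+k$ and $q' > k$: here $(H^{p'}(\X))^{q'} = H^{p'}(\K^*)\otimes F^*_{q'}$, and since $\K(\bx)$ resolves nothing but we are taking cohomology of the dual Koszul complex of $l$ elements generating the zero-dimensional (hence $S_+$-primary up to radical, grade $n$) ideal $I$, the cohomology $H^{p'}(\K^*)$ is supported only in degrees $p'$ with... — and this is the step I expect to be the real obstacle.

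The subtlety is that $\K^*$ is the Koszul \emph{co}chain complex on $\bx$, whose cohomology $H^p(\K^*) = H_{l-p}(\bx, S)$ is generally nonzero for many $p$ (it is not a resolution unless $\bx$ is a regular sequence, which we are not assuming). So the clean vanishing used in Claim 3 of Theorem \ref{mt-1} — where $F^*_q = 0$ for $q > \projdim M$ — has no direct analogue here, and one cannot expect horizontal homology to vanish for all $q' > k$ outright. I would therefore look for the complementary handle: use the \emph{vertical} homology of $\X$, namely $(H^{q'}(\X))^{p'} = K^*_{p'} \otimes \Ext_S^{q'}(M, S)$, which vanishes precisely when $\Ext^{q'}_S(M,S) = 0$; since $M$ has codimension $\codim M$ and finite projective dimension over $S$, $\Ext^{q'}_S(M,S) = 0$ for $q' < \codim M$. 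This is exactly why the hypothesis $k \leq \codim M$ appears: it guarantees the vertical-homology vanishing for $q' < k \leq \codim M$ at total degree $n+k$ (where $p' \geq 0$ forces $q' \leq n+k$, and we need $q' < k$). Combined with a more careful analysis of which $H^{p'}(\K^*)$ can contribute at total degree $n+k$ with $q' > k$ — using that $I$ zero-dimensional makes $H_j(\bx, S)$ finite-length, so its tensor with the free module $F^*_{q'}$ is controlled — Lemma \ref{ehu} should then apply with $(p,q) = (n,k)$. Assembling these vanishing statements in the right ranges, and in particular pinning down precisely the horizontal-homology behaviour for $q' > k$, is the main obstacle; the degree bookkeeping afterwards is routine and parallels Theorem \ref{mt-1}.
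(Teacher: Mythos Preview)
Your proposal follows the paper's proof almost exactly: collapse the second spectral sequence using finite length of the Koszul homology, apply local duality to translate into a $\mindeg$ statement for $H^{n+k}(\D^*)$, and then kill low-degree cycles via Lemma~\ref{ehu} at the bidegree $(n,k)$. You also correctly identify the vertical-homology vanishing for $q' < k$ via $\Ext^{q'}_S(M,S) = 0$ when $q' < \codim M$, which is precisely why the hypothesis $k \le \codim M$ appears.

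The step you flag as the ``main obstacle'' --- horizontal homology of $\X = \K^*\otimes\F^*$ for $q' > k$ --- is not an obstacle at all, and the paper dispatches it in one line. On the diagonal $p' + q' = n + k$ with $q' > k$ one has $p' < n$. Since $I$ is zero-dimensional, $\grade(I,S) = n$, and by depth sensitivity of the Koszul complex $H_j(\bx, S) = 0$ for all $j > l - n$; using self-duality of the Koszul complex, $H^{p'}(\K^*) \cong H_{l-p'}(\bx, S) = 0$ whenever $p' < n$. Hence $(H^{p'}(\X))^{q'} = H^{p'}(\K^*)\otimes F^*_{q'} = 0$ for every such $(p',q')$, and Lemma~\ref{ehu} applies. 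You actually wrote ``grade $n$'' at the relevant spot but did not draw the depth-sensitivity conclusion, reaching instead for the finite-length property of $H_j(\bx,S)$, which is true but not what is needed. Insert that one sentence and your argument is complete and coincides with the paper's.
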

\begin{proof}
We consider the second spectral sequence  on $\C$. By \ref{second}  and \ref{com-D} we have
 ${}^{II}E^{pq}_2 = \Ext^p_S(H_q(\bx, M), S)$
Since $\ell( H_i(\bx, M))$ is also finite for all $i \geq 0$ we have
that ${}^{II}E^{pq}_2 = 0$ for all $p \neq n$. So the second spectral sequence on $\C$ collapses. By
\ref{second} it follows that
\begin{equation*}
\Ext^n_S(H_k(\bx, M), S) = H^{n+k}(\D^*) \quad \text{for all}\ k\geq 0. \tag{$\dagger$}
\end{equation*}
By local duality we have
\[
H_k(\bx, M) = \Hom_K\left(\Ext^n_S(H_k(\bx, M), S),  K \right)(n)
\]
So by ($\dagger$) we get that
\[
\reg H_k(\bx, M) = - \mindeg H^{n+k}(\D^*)  -n
\]
Notice $\D^* = \Tot(\K^* \otimes \F^*)$.
We prove that if $k \leq \codim M$ then any homogenous element $\xi$ in $H^{n+k}(\D^*)$ of
\[
\deg \xi < - \sum_{i=1}^{n}|x_i|   - t_{k}(M)
\]
is zero. \\
Let $\xi = [z]$ where $z = (z^{p,q} \mid p+q = n+k)$ is a cycle.

Claim 1: $z^{n,k} = 0$.
   \begin{align*}
\mindeg K^*_{n} \otimes F^*_{k} &= \mindeg K^*_{n} + \mindeg F^*_{k} \\
                    &= \mindeg K^*_{n} \otimes F^*_{k} \\
                    &= - \sum_{i=1}^{n}|x_i|   - t_{k}(M) \\
                    &> \deg \xi
\end{align*}
So $z^{n,k} = 0$.\\
Set $\X = \K^* \otimes \F^*$. We look at the vertical and horizontal homology of this bi-complex.\\
Claim 2: Vertical homology of $\X$ is zero for $q < k \leq \codim M$. \\
Notice $(H^q(\X))^p = K_p^* \otimes \Ext^q_S(M,S) = 0$ for $q < \codim M$.

Claim 3: Horizontal homology of $\X$ is zero for $q > k$. \\
Then note $p < n$
Notice $(H^p(\X))^q = H^p(\bx)\otimes F_q^*  = 0$ for $p < n$.

The result now follows from \ref{ehu}.
\end{proof}
\providecommand{\bysame}{\leavevmode\hbox to3em{\hrulefill}\thinspace}
\providecommand{\MR}{\relax\ifhmode\unskip\space\fi MR }
\providecommand{\MRhref}[2]{%
  \href{http://www.ams.org/mathscinet-getitem?mr=#1}{#2}
}
\providecommand{\href}[2]{#2}


\begin{thebibliography}{1}

\bibitem{EHU}
D.~Eisenbud, C.~Huneke, and B.~Ulrich, \emph{The regularity of {T}or
  and graded {B}etti numbers}, Amer. J. Math. \textbf{128} (2006), no.~3,
  573--605.

 \bibitem {H}
 C.~Huneke,
  \emph{Linkage and the Koszul homology of ideals},
Amer. J. Math. 104 (1982),  1043--1062.

\bibitem{Vas}
W.~V.~Vasconcelos, \emph{Arithmetic of blowup algebras}, London
  Mathematical Society Lecture Note Series, vol. 195, Cambridge University
  Press, Cambridge, 1994.

  \bibitem{VasBook-05}
W.~V.~Vasconcelos,
\emph{Integral closure. Rees algebras, multiplicities, algorithms},
 Springer Monographs in Mathematics. Springer-Verlag, Berlin, 2005.


\bibitem{Weibel}
C.~A.~Weibel, \emph{An introduction to homological algebra}, Cambridge
  Studies in Advanced Mathematics, vol.~38, Cambridge University Press,
  Cambridge, 1994.

\end{thebibliography}
\end{document}